\begin{document}
\pagestyle{plain}
\bibliographystyle{plain}
\newtheorem{theo}{Theorem}[section]
\newtheorem{lemme}[theo]{Lemma}
\newtheorem{cor}[theo]{Corollary}
\newtheorem{defi}[theo]{Definition}
\newtheorem{prop}[theo]{Proposition}
\newtheorem{problem}[theo]{Problem}
\newtheorem{remarque}[theo]{Remark}
\newcommand{\beq}{\begin{eqnarray}}
\newcommand{\enq}{\end{eqnarray}}
\newcommand{\be}{\begin{eqnarray*}}
\newcommand{\en}{\end{eqnarray*}}
\newcommand{\ben}{\begin{eqnarray*}}
\newcommand{\enn}{\end{eqnarray*}}
\newcommand{\Td}{\mathbb T^d}
\newcommand{\Rd}{\mathbb R^n}
\newcommand{\R}{\mathbb R}
\newcommand{\N}{\mathbb N}
\newcommand{\Sn}{\mathbb S}
\newcommand{\Zd}{\mathbb Z^d}
\newcommand{\Linf}{L^{\infty}}
\newcommand{\dt}{\partial_t}
\newcommand{\Dt}{\frac{d}{dt}}
\newcommand{\Dtt}{\frac{d^2}{dt^2}}
\newcommand{\demi}{\frac{1}{2}}
\newcommand{\vf}{\varphi}
\newcommand{\epu}{_{\varepsilon}}
\newcommand{\ep}{^{\varepsilon}}
\newcommand{\bfi}{{\mathbf \Phi}}
\newcommand{\bpsi}{{\mathbf \Psi}}
\newcommand{\bx}{{\mathbf x}}
\newcommand{\dis}{\displaystyle}
\newcommand{\ds}{\partial_s}
\newcommand{\dss}{\partial_{ss}}
\newcommand{\dx}{\partial_x}
\newcommand{\dxx}{\partial_{xx}}
\newcommand{\dy}{\partial_y}
\newcommand{\dyy}{\partial_{yy}}
\newcommand{\dtt}{\partial_{tt}}
\newcommand {\g}{\`}
\newcommand{\E}{\mathbb E}
\newcommand{\bQ}{\mathbb Q}
\newcommand{\1}{\mathbb I}
\newcommand{\bF}{\mathbb F}
\newcommand{\F}{\cal F}
\newcommand{\bP}{\mathbb P}
\let\cal=\mathcal
\newcommand{\lb}{\langle}
\newcommand{\rb}{\rangle}
\newcommand{\bu}{\bar{u}}
\newcommand{\uu}{\underline{u}}
\newcommand{\bv}{\bar{v}}
\newcommand{\uv}{\underline{v}}
\newcommand{\cS}{{\cal S}}
\newcommand{\cSf}{\cS_\Phi}
\newcommand{\bw}{\bar{w}}
\newcommand{\uw}{\underline{w}}
\newcommand{\bsig}{\bar{\sigma}}
\newcommand{\usig}{\underline{\sigma}}
\renewcommand{\theequation}{\arabic{section}.\arabic{equation}}
\def\red#1{\textcolor[rgb]{1,0,0}{ #1}}

\author{Gr\'egoire Loeper$^{1}$, Fernando Quir\'os$^2$} 
\address{$^{1}$Monash University, School of Mathematical Sciences\\
$^2$Universidad Aut\'onoma de Madrid}
\title{Interior second derivatives estimates for nonlinear diffusions}
\date{\today}
 

\begin{abstract}
By an extension of of some estimates due to Crandall and Pierre \cite{CranPieA} and Di Benedetto \cite{DiBenedettoDegen} we derive consequences for fully nonlinear parabolic equations of the form
$\dt v + F(t,x,D^2v)=0$,
where $F$ can be both singular and degenerate elliptic and also non-homogeneous. Such equations appear in the theory of option pricing with market impact. 
\end{abstract}

\subjclass[2010]{35K55, 35B45, 35B65, 35Q91, 91G20}
\keywords{Fully nonlinear degenerate and singular parabolic equations, option pricing, market impact, stochastic control, time derivative estimates, expansion of positivity}
\maketitle
\section{Introduction}
\label{sect-introduction} \setcounter{equation}{0}

The original motivation for this paper is the study of fully nonlinear parabolic partial differential equations of the form
\beq\label{eq:hjb-intro}
\dt v + F(t,x,\dxx v)=0,
\enq
where $u$ is defined in $[0,T]\times \R$, the terminal condition  $u(T,\cdot)$ is given, and the solution is solved backwards in time. We investigate the case where $F(t,x,\gamma)$ is typically a convex function in its third argument, with its derivative $F_\gamma$ going from $0$ at $-\infty$ to $+\infty$ at $\bar\gamma$ (potentially $\bar\gamma=\infty$). One  example is 
\beq\label{eq.aberloep}
\dt v +\demi \sigma^2(t,x)\left(a  +\frac{b}{(1-\lambda \dxx v)^{p_1}} + \frac{c}{(1-\lambda \dxx v)^{p_2}}\right)=0,
\enq
for $t\in [0,T], x\in \R$,
which comes from theory of option pricing with {\it market impact}, see \cite{AbergelLoeper,  BoLoZo1, BoLoZo2, LoMi1, BoLoSonZou}. There, $0<p_1<p_2$, $\lambda>0$, and $\sigma$ is a bounded Lipschitz function such that $\inf\sigma>0$.  
The conditions  $b, c >0$ guarantee that the equation is parabolic as long as $\lambda \dxx v < 1$, and $a+b+c=0$ ensures that constants are solutions.

The equation is singular when $\lambda \dxx v \to 1^-$ and degenerate when $\dxx v \to -\infty$. Our aim is to obtain a priori interior estimates for the second derivatives guaranteeing that the equation is neither degenerate nor singular if we are away from the terminal time $T$. Namely, we will prove that, if there exists a supersolution, then, for any $\tau>0$, there exists  some $\varepsilon(\tau)>0$ such that 
\[-\varepsilon^{-1} \leq \dxx v\leq \lambda^{-1} - \varepsilon \quad \text{for }t\le T-\tau.
\]
Consequently the equation is uniformly parabolic away from the terminal time and higher regularity follows by standard arguments. 

General equations of the form \eqref{eq:hjb-intro} with singular behaviour are also met in some problems related to optimal transport by diffusions, see \cite{TanTouz, GuoLeoLo, GuoLo}.

Some of our results are quite general and apply to solutions of
\beq\label{eq:intro-v}
\dt v = F(t,x,-A(v)),
\enq
for $A$ an {\it accretive} operator as in ~\cite{CranPieA}.
The most important cases will be $A=-\dxx$, or $A=-\Delta$ in higher dimensions. To obtain our results, we will study the equation followed by $u=-Av$:
\begin{equation}
\label{eq:non.separable}
\partial_t u + A(F(t,x,u))=0.
\end{equation}

Our paper consists of three estimates for solutions to~\eqref{eq:non.separable} which have independent interest.

The first result is a generalisation of the classical estimate obtained by Aronson and B\'enilan in~\cite{Aronson-Benilan} for the time derivative of non-negative solutions of~\eqref{eq:non.separable} when $A=-\Delta$ and $F(t,x,u)=u^m$, $m>(d-2)^+/d$, where $d$ is the spatial dimension. This estimate was later extended by Crandall and Pierre to the case in which $F(t,x,u)=\varphi(u)$, under some assumptions on $\varphi$, first for $A=-\Delta$ in~\cite{CranPieDelta}, and later for general accretive operators in~\cite{CranPieA}. Here we generalize this last result to the case in which $F$ is not homogeneous, neither in space nor in time,  giving an unconditional (i.e. independent of the initial data) information on $\dt u$. It is somewhat a surprise that there is no need for any regularity of $F$ with respect to $x$, only with respect to $t$ and $u$. These results are given first in the separable case, $F(x,t,u)=\kappa(t,x)\varphi(u)$, in Theorem~\ref{theo-u}, and are later extended to the general non-separable case in Theorem~\ref{theo:non.separable}. 

The second result, Theorem~\ref{theo-v},  is a consequence of Theorem~\ref{theo:non.separable} for solutions to~\eqref{eq:intro-v}
when $F$ can be singular for large values of $-A(v)$, still under some structure condition on the behavior of $F$ with respect to $u$. We show interior $C^2$ regularity under the assumption of the existence of a supersolution.

The third result, Theorem \ref{theo:degen},  shows 
expansion of positivity  for equations of the form 
\ben
\dt v = F(t,D_x v, \Delta v),
\enn
with $v$ convex, and $F(t,p,z)$  singular for $z\sim0$. 
This result is in the spirit of the one of Di Benedetto  \cite{DiBenedettoDegen}, in a case where we have gradient dependency. Under a Legendre transform, this result will imply the bound from below for $\dxx v$ in equation \eqref{eq:hjb-intro}.

Building on these results we deduce the interior regularity for solutions of~\eqref{eq:hjb-intro} in Theorem~\ref{theo:reg-aber-loep}.

\section{Time derivative estimate and applications to the singular case}
\label{sect-time.derivative.estimate} \setcounter{equation}{0}

In this section we generalize  the time derivative estimate obtain by B\'enilan and Crandall in~\cite{CranPieA} and derive consequences for singular partial differential equations that appear in option pricing. 
\subsection{The operator} 
As in~\cite{CranPieA}, we assume that:
\begin{itemize} 
	\item $A$ is a densily defined,  $m$-accretive in $L^1(\mathbb{R}^d)$ linear operator.   
    \item	If $u\in L^1(\mathbb{R}^d)\cap \Linf(\mathbb{R}^d)$ and $\beta$ is a monotone graph in $\R\times \R$ with $0\in \beta(0), v\in \beta(u)$ then 
	\begin{equation}
	\label{A3}\int vA(u) \, dx \geq 0.
	\end{equation}
\end{itemize}

Thanks to~\eqref{A3} we have a comparison principle, which will be important in the sequel. 
\begin{lemme}
	The comparison principle holds for solutions in $L^1 \cap L^\infty$ of equation~\eqref{eq:non.separable}. 
\end{lemme}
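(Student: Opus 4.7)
The plan is to establish the comparison principle by a Kato–B\'enilan $L^1$-argument that combines the monotonicity of $F$ in its $u$-argument (implicit in the parabolicity of the equation) with the accretivity property~\eqref{A3}.

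First I would subtract the equations satisfied by two solutions $u_1,u_2\in L^1(\R^d)\cap\Linf(\R^d)$ to obtain
\[
\dt(u_1-u_2)+A(W)=0,\qquad W(t,x):=F(t,x,u_1)-F(t,x,u_2).
\]
Since $F(t,x,\cdot)$ is nondecreasing, the map $s\mapsto F(t,x,u_2(t,x)+s)-F(t,x,u_2(t,x))$ is a (possibly multivalued) monotone graph vanishing at $0$; equivalently there exists a monotone graph $\gamma(t,x,\cdot)$ with $0\in\gamma(0)$ such that $u_1-u_2\in\gamma(W)$ almost everywhere.

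Next, I would approximate $s\mapsto s^+$ by a smooth convex $J_\eta$ with $J_\eta'$ nondecreasing, $J_\eta'\equiv 0$ on $(-\infty,0]$, and $J_\eta(s)\to s^+$ as $\eta\to 0$. Testing the subtracted equation against $J_\eta'(u_1-u_2)$ yields
\[
\frac{d}{dt}\int J_\eta(u_1-u_2)\,dx = -\int J_\eta'(u_1-u_2)\,A(W)\,dx.
\]
Because $J_\eta'\circ\gamma$ is again a monotone graph through $(0,0)$, the integrand $J_\eta'(u_1-u_2)$ belongs to this graph evaluated at $W$, and applying~\eqref{A3} to $W$ and this graph gives nonpositivity of the right-hand side. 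Integrating in time and passing to the limit $\eta\to 0$ then produces
\[
\int(u_1-u_2)^+(t,x)\,dx\le\int(u_1-u_2)^+(0,x)\,dx,
\]
which is exactly the desired comparison.

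The main obstacle is the rigorous justification of the testing step, since $A$ is an abstract operator and $u_1,u_2$ are only $L^1\cap\Linf$ functions, a priori not in $D(A)$ nor differentiable in time. The natural remedy is to work at the level of mild solutions produced by the implicit Euler scheme $u^{k}+h A\bigl(F(t_k,\cdot,u^{k})\bigr)=u^{k-1}$: at each discrete step the $L^1$-contraction follows directly from~\eqref{A3} by the same calculation applied to the resolvent equation, and the Crandall–Liggett passage to the continuous limit transfers the estimate to the PDE. A secondary technicality, the construction of $\gamma$ on the set where $F$ is locally flat in $u$, is handled by reading $\gamma$ as the multivalued inverse of the monotone graph built from $F$, which is admissible in~\eqref{A3} since that assumption is phrased for arbitrary monotone graphs.
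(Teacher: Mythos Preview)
Your proposal is correct and follows essentially the same Kato--B\'enilan $L^1$ argument as the paper: subtract the two equations, multiply by a monotone function of the difference, and invoke \eqref{A3} to kill the $A$-term. The paper carries this out in one line by multiplying directly by the indicator $\mathds{1}_{u\le v}$, whereas you smooth via $J_\eta'$ and add a careful discussion of the monotone graph $\gamma$, the mild-solution framework, and the implicit Euler justification---all of which the paper simply takes for granted. The extra rigor is welcome but not a different method.
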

\begin{proof} Assume that $u(0) \geq v(0)$, take the difference of the equations \eqref{eq:non.separable} for $u$ and $v$,  multiply by $\mathds{1}_{u \leq v}$, and use~\eqref{A3} to conclude.
\end{proof}

\subsection{The separable case}
Let $u$ be a non-negative solution on $t>0$ to 
\beq\label{main}
\dt u  + A(\kappa(t,x) \vf(u))=0.
\enq
Under an structural assumption on $\varphi$, which coincides with that in~\cite{CranPieA} for the case in which $\kappa=1$, and with some regularity hypothesis on $\kappa$,   there is an unconditional estimate for the time derivative of non-negative solutions of~\eqref{main}, as we show next.
\begin{theo}\label{theo-u}
Let $u$ be a non-negative classical solution to \eqref{main} on $[0,T]$ belonging to $L^1(\mathbb{R}^d)\cap \Linf(\mathbb{R}^d)$, and assume that $\vf$ is non-decreasing, with $\vf(0)=0$ and satisfies for some $m>0$, $\theta\in \{-1,1\}$
\beq\label{m}
\inf_{u\geq 0}\left\{ \theta \frac {\vf(u)\vf''(u)} {(\vf'(u))^2}\right\} \geq m.
\enq
Assume also that $\kappa$ is positive and such that
\beq\label{eq:conditions.kappa}
\sup_{t\in[0,T],\,x}\{ \kappa, \kappa^{-1}, |\dt \kappa|, |\dtt \kappa|\} \leq L
\enq
for some constant $L>0$.
Then there exists a constant $\rho>0$ depending only on $m, L, T$ such that 
\beq
\label{eq:monotonicity.separable}
t \to \theta t^{\rho\theta}\kappa(t,x)\vf(u(t,x)) \quad \text{is non-decreasing in }[0,T].
\enq
\end{theo}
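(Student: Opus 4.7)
\emph{Setup.} I would set $v(t,x) := \kappa(t,x)\varphi(u(t,x))$, so that \eqref{main} reads $\partial_t u = -A(v)$, and consider the auxiliary quantity
\[
Z := \theta\,\partial_t v + \frac{\rho\,v}{t}, \qquad t \in (0,T].
\]
Direct differentiation of $\theta t^{\rho\theta} v$ shows that the monotonicity statement \eqref{eq:monotonicity.separable} is equivalent to the pointwise inequality $Z \ge 0$, so the whole problem is to exhibit $\rho = \rho(m,L,T)>0$ enforcing it.

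\emph{Derivation of a differential inequality for $Z$.} Differentiating $v$ once in $t$ and using $\partial_t u = -A(v)$ produces
\[
\partial_t v = \alpha v - \kappa\varphi'(u) A(v), \qquad \alpha := \partial_t\kappa/\kappa,
\]
with $\|\alpha\|_\infty,\|\partial_t\alpha\|_\infty$ bounded by \eqref{eq:conditions.kappa}. Differentiating once more in $t$ and systematically replacing every occurrence of $\kappa\varphi'(u)A(v)$ by $\alpha v-\partial_t v$, I get, for $P:=\partial_t v$,
\[
\partial_t P + \kappa\varphi'(u) A(P) \;=\; (\partial_t\alpha - \alpha^2)\,v + 2\alpha P + \frac{\varphi''(u)}{\kappa(\varphi'(u))^2}\,(\alpha v - P)^2.
\]
Multiplying by $\theta$ and using \eqref{m} rewritten as $\theta\varphi''/(\varphi')^2 \ge m\kappa/v$, the quadratic term is lower-bounded by $m(\alpha v-P)^2/v$. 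Combining this with the equation for $v$ and simplifying, a short computation gives, at points where $Z = 0$ (i.e.\ where $P=-\theta\rho v/t$),
\[
\partial_t Z + \kappa\varphi'(u)\,A(Z) \;\ge\; v\Bigl[\,\theta(\partial_t\alpha-\alpha^2) + m\alpha^2 + (2m\theta-1)\alpha\rho/t + \rho(m\rho-1)/t^2\,\Bigr].
\]
Because $\alpha,\partial_t\alpha$ are bounded in terms of $L$, choosing $\rho$ large enough (depending on $m,L,T$) makes the $\rho^2/t^2$-term dominate and the bracket nonnegative on $(0,T]$.

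\emph{Maximum principle via accretivity.} To promote this ``at $Z=0$'' inequality into the conclusion $Z \ge 0$ everywhere, I would apply the accretivity condition \eqref{A3} with the monotone graph $\beta(s)=-\mathds{1}_{\{s<0\}}$ (which satisfies $0\in\beta(0)$) to obtain $\int\mathds{1}_{\{Z<0\}} A(Z)\,dx \le 0$. Testing the differential inequality for $Z$ against $-\mathds{1}_{\{Z<0\}}$, using that the coefficient $\kappa\varphi'(u)$ is nonnegative, and integrating in $x$ should yield a Gr\"onwall inequality for $t\mapsto \int (Z)_-\,dx$. The Cauchy datum of this Gr\"onwall vanishes as $t\to 0^+$ because $\rho v/t\to +\infty$ on $\{v>0\}$ while $\theta\partial_t v$ stays bounded, so $(Z)_-\equiv 0$ on $(0,T]$.

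\emph{Main obstacle.} The delicate step is the last one. Since $A$ is only an abstract $m$-accretive operator on $L^1$, not a local differential operator, a pointwise parabolic maximum principle is unavailable and one must argue entirely through the integral inequality \eqref{A3}; moreover the positive, spatially varying coefficient $\kappa\varphi'(u)$ in front of $A(Z)$ has to be handled carefully—it is precisely the fact that this coefficient absorbs all $x$-dependence of the equation that explains why no regularity of $\kappa$ in $x$ is needed. A secondary, more routine task is the rigorous justification of the formal differentiations in the derivation step by a standard approximation of $\varphi$ and of $u$, as the equation is only assumed to hold classically.
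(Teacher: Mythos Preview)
Your plan is very close in spirit to the paper's: both introduce an auxiliary quantity whose nonnegativity is equivalent to~\eqref{eq:monotonicity.separable}, derive a parabolic-type equation for it, and invoke the accretivity property~\eqref{A3} to conclude. In fact your $Z$ and the paper's $w:=t\,\partial_t u+(\theta\rho+t\,\partial_t\kappa/\kappa)\,\varphi(u)/\varphi'(u)$ are related by $tZ=\kappa\varphi'(u)\,\theta w$, so they have the same sign and your reduction of the statement to $Z\ge 0$ is correct.

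The genuine gap is precisely the point you flag as the ``main obstacle'', and your proposal does not resolve it. In your formulation the equation for $Z$ carries the coefficient $\kappa\varphi'(u)$ \emph{outside} the operator, namely $\partial_t Z+\kappa\varphi'(u)\,A(Z)=\cdots$. Property~\eqref{A3} requires the test function to lie in a \emph{single} ($x$-independent) monotone graph of the argument of $A$. After multiplying by $-\mathds{1}_{\{Z<0\}}$ and integrating, the $A$-term is $\int(-\mathds{1}_{\{Z<0\}})\,\kappa\varphi'(u)\,A(Z)\,dx$; but $-\mathds{1}_{\{Z<0\}}\kappa\varphi'(u)$ does not lie in any fixed graph $\beta(Z)$, since its value on $\{Z<0\}$ varies with $x$ through $\kappa\varphi'(u)$. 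Hence~\eqref{A3} does not apply, and there is no obvious sign for this term even when $A=-\Delta$ (integration by parts produces a $\nabla(\kappa\varphi'(u))$ contribution with no sign). The pointwise ``at $Z=0$'' computation, while correct, is likewise a local-operator reflex that does not transfer to the abstract accretive setting.

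The paper's remedy is exactly the change of unknown you are missing: work with $\partial_t u$ rather than $\partial_t v$. Differentiating~\eqref{main} in time gives $\partial_{tt}u+A\big(\partial_t(\kappa\varphi(u))\big)=0$, so the coefficient $\kappa\varphi'(u)$ lands \emph{inside} $A$. One obtains an identity of the form
\[
t\,\partial_t(\theta w)+A\big(t\kappa\varphi'(u)\,\theta w\big)+Q\,\theta w=(\text{positive})\cdot\frac{\varphi(u)}{\varphi'(u)},
\]
with $Q>0$ once $\rho$ is large (this uses~\eqref{m} and~\eqref{eq:conditions.kappa} exactly as you do). Now $-\mathds{1}_{\{\theta w\le 0\}}$ \emph{is} a nondecreasing function of $t\kappa\varphi'(u)\,\theta w$ (same sign), so~\eqref{A3} applies directly and yields $\partial_t\int(\theta w)^-\le 0$. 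This is also why no $x$-regularity of $\kappa$ is needed: the $x$-dependence is buried inside the argument of $A$ and never differentiated. Your diagnosis of the difficulty is right; the cure is to move the coefficient inside $A$ by building the auxiliary function on $\partial_t u$.
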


\begin{proof} We consider
\beq\label{defv}
w=t\dt u + \left(\theta\rho + t\frac{\dt \kappa}{\kappa}\right) \frac{\vf(u)}{\vf'(u)},
\enq
where $\rho>0$ is a constant  to be chosen later.
Differentiating equation \eqref{main} with respect to time we get
\ben
\dtt u  + A\left(\kappa\vf'(u) \left( \dt u +  \frac{\dt \kappa}{\kappa}\frac{\vf(u)}{\vf'(u)}\right)\right)=0,
\enn
wich reads also
\ben
\dtt u + \frac{1}{t}A\left(\kappa \vf'(u) \left(w - \theta\rho\frac{\vf(u)}{\vf'(u)}\right)\right)=0,
\enn
while differentiating \eqref{defv} we obtain
\ben
\dt w = t\dtt u + \dt u + \left(\theta\rho + t\frac{\dt \kappa}{\kappa}\right) \dt u \left(1-\frac{\vf(u) \vf''(u)}{\vf'^2(u)}\right) +\dt\left( t \frac{\dt \kappa}{\kappa}\right) \frac{\vf(u)}{\vf'(u)}.
\enn
Combining these two identities with \eqref{main} and \eqref{defv} we obtain
\[
\begin{array}{l}
\displaystyle
\dt w + A\left(\kappa \vf'(u) w\right)=\theta\rho A\left(\kappa \vf(u)\right) 
+\dt u \\[10pt]
\displaystyle\qquad+ \left(\theta\rho + t\frac{\dt \kappa}{\kappa}\right) \dt u \left(1-\frac{\vf(u) \vf''(u)}{\vf'^2(u)}\right) +\dt\left( t \frac{\dt \kappa}{\kappa}\right) \frac{\vf(u)}{\vf'(u)}\\[10pt]
\displaystyle\quad= \dt u \left(1+t\frac{\dt \kappa}{\kappa} - \left(\theta\rho + t\frac{\dt \kappa}{\kappa}\right)\frac{\vf(u) \vf''(u)}{\vf'^2(u)}\right)
+\dt\left( t \frac{\dt \kappa}{\kappa}\right) \frac{\vf(u)}{\vf'(u)}\\[10pt]
\displaystyle\quad= \frac{1}{t}\left(w - \left(\theta\rho + t \frac{\dt\kappa}{\kappa}\right)\frac{\vf(u)}{\vf'(u)}\right)\left(1+t\frac{\dt \kappa}{\kappa} - \left(\theta\rho + t\frac{\dt \kappa}{\kappa}\right)\frac{\vf(u) \vf''(u)}{\vf'^2(u)}\right)\\[10pt]
\displaystyle\qquad
+\dt\left( t \frac{\dt \kappa}{\kappa}\right) \frac{\vf(u)}{\vf'(u)}.
\end{array}
\]
Defining 
$$
\tilde{\rho}=\rho  + \theta\, t \frac{\dt\kappa}{\kappa},\qquad
Q=-\left(1+t\frac{\dt \kappa}{\kappa}\right) + \tilde\rho \theta \frac{\vf(u) \vf''(u)}{\vf'^2(u)},
$$
this can be rewritten as
\begin{equation}
\label{eq:tilde.w}
t \dt (\theta w) + A\left(t\kappa \vf'(u) \theta w\right)+Q   \theta w =\left (\theta\,t \dt\Big( t \frac{\dt \kappa}{\kappa}\Big) +\tilde{\rho} Q\right)\frac{\vf(u)}{\vf'(u)}.
\end{equation}
It follows easily from hypotheses~\eqref{m} and~\eqref{eq:conditions.kappa} that if we take $\rho$ large enough then $\tilde\rho$ is positive and large enough so that
\[
Q> 0,\qquad
\theta\,t\dt\Big( t \frac{\dt \kappa}{\kappa}\Big) +\tilde{\rho} Q > 0.
\]
Thus, if we multiply equation~\eqref{eq:tilde.w} by $\imath = -\mathds{1}_{\{\theta w\leq 0\}}$, we get that 
\ben
 t\dt \int  (\theta w)^- + B + Q \int (\theta w)^- \le 0,
\enn
where $B= \int \imath A\left(t\kappa \vf'(u) \theta w\right)$.  Since $\imath$ is a non-decreasing function of $\kappa \vf'(u) (\theta w)$, property~\eqref{A3} implies $B\ge0$. Hence $\dt \int (\theta w)^- \leq 0$. On the other hand,  $\theta w (0)\ge0$. Therefore, since $(\theta w)^-$ is non-negative, it is identically 0 for $t\ge 0$, and hence $\theta w\ge0$.

To conclude, we notice that 
\ben
\partial_t(\theta t^{\rho\theta}\kappa(t,x)\varphi(u(t,x))) =t^{\rho\theta-1}\kappa \varphi'(u)\theta w\ge0,
\enn
which implies~\eqref{eq:monotonicity.separable}. 
\end{proof}


\subsection{The general (non-separable) case}
The monotonicity formula~\eqref{eq:monotonicity.separable} can be extended to equations in the general non-separable form~\eqref{eq:non.separable}
\begin{theo}\label{theo:non.separable}
	Let $u$ be a non-negative classical solution to \eqref{eq:non.separable} on $[0,T]$, belonging to $L^1(\Rd)\cap \Linf(\Rd)$.  Assume that $F(t,x,u)$ is non-decreasing in~$u$, satisfies $F(t,x,0)=0$, 
	$$
	\frac{F_t}{F}, \Big(\frac{F_t}{F}\Big)_t, \frac{F_{u,t}}{F_u}
	\quad\text{are bounded},
	$$ 
	and for some $m>0$, $\theta\in \{-1,1\}$
	\beq\label{mforF}
	\inf_{u\geq 0,\, x\in\mathbb{R}^d,\, t\in[0,T]}\left\{ \theta \frac {F_{uu}F} {F_u^2}\right\} \geq m.
	\enq
	Then,  there exists a constant $\rho>0$, independent of $u$, such that 
	\beq
	\label{eq:monotonicity.non-separable}
	t \to \theta t^{\rho\theta}F(t,x,u(t,x)) \quad \text{is non-decreasing in }[0,T].
	\enq
	
\end{theo}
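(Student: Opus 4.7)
The plan is to mimic the proof of Theorem~\ref{theo-u} with the separable quantity $\vf(u)/\vf'(u)$ replaced by $F(t,x,u)/F_u(t,x,u)$, and the logarithmic factor $\dt\kappa/\kappa$ replaced by $F_t/F$. Concretely, define
\[
w(t,x) = t\dt u + \left(\theta\rho + t\,\frac{F_t}{F}\right)\frac{F}{F_u},
\]
where $F$ and its partial derivatives are evaluated at $(t,x,u(t,x))$, and $\rho>0$ is a constant to be chosen. The key algebraic identity is $F_u\,w = t\dt F + \theta\rho F$, which gives
\[
\dt\bigl(\theta\, t^{\theta\rho}\, F(t,x,u(t,x))\bigr)=t^{\theta\rho-1}\,F_u\,\theta w.
\]
Since $F$ is non-decreasing in $u$, i.e.\ $F_u\geq 0$, proving \eqref{eq:monotonicity.non-separable} is reduced to showing $\theta w\geq 0$ on $[0,T]\times\mathbb{R}^d$.

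To this end, I would differentiate \eqref{eq:non.separable} in time, obtaining
\[
\dtt u + A\!\left(F_u\!\left(\dt u + \frac{F_t}{F_u}\right)\right)=0,
\]
and use the definition of $w$ to rewrite the argument of $A$ as $\tfrac{F_u}{t}(w-\theta\rho F/F_u)$. Differentiating $w$ directly along the trajectory (applying the chain rule to $F/F_u$ and to $F_t/F$, which introduces $F_{uu}$, $F_{ut}$, and $(F_t/F)_t$) and combining with the above identity and with \eqref{eq:non.separable} should yield an evolution equation of the form
\[
t\,\dt(\theta w) + A\bigl(tF_u\,\theta w\bigr) + Q\,\theta w = R\cdot\frac{F}{F_u},
\]
which is the exact analogue of \eqref{eq:tilde.w}, with $Q$ and $R$ polynomial expressions in $\rho$, $tF_t/F$, $t(F_t/F)_t$, $tF_{ut}/F_u$, and the dimensionless quantity $\theta F F_{uu}/F_u^{2}$.

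Using \eqref{mforF} together with the assumed boundedness of $F_t/F$, $(F_t/F)_t$, and $F_{ut}/F_u$, one chooses $\rho$ large enough so that $Q>0$ and $R>0$ uniformly on $[0,T]\times\mathbb{R}^d\times[0,\infty)$. The argument then closes exactly as in Theorem~\ref{theo-u}: multiplying by $-\mathds{1}_{\{\theta w\le 0\}}$, integrating in space, and using the accretivity property \eqref{A3} applied to $F_u\theta w$ (recall that $\mathds{1}_{\{\theta w\le 0\}}$ is a monotone function of $F_u\theta w$ since $F_u\geq 0$) gives $\dt\!\int(\theta w)^-\,dx\le 0$. Since $\theta w(0,\cdot)=\rho\,F/F_u\ge 0$, it follows that $(\theta w)^-\equiv 0$, hence $\theta w\geq 0$ and the monotonicity follows from the identity recorded above.

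The main obstacle is the bookkeeping needed to derive the equation for $\theta w$ in the form displayed. In the separable case the cross derivative $F_{ut}=\dt\kappa\,\vf'(u)$ combines automatically with the $\dt\kappa/\kappa$ terms already present, whereas in the general case $F_{ut}$ contributes an independent cross term that only closes thanks to the new hypothesis that $F_{ut}/F_u$ be bounded. The nontrivial point is then to check that, after these extra contributions are absorbed, the coefficients $Q$ and $R$ are still dominated by $\rho\cdot(\theta FF_{uu}/F_u^{2})\geq m\rho$ and can be made positive for $\rho$ large, uniformly in $(t,x,u)$.
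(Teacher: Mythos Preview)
Your proposal is correct and follows precisely the paper's own argument: the same choice of $w$, the same derivation of an equation of the form $t\dt(\theta w)+A(tF_u\theta w)+Q\theta w=R\,F/F_u$, the same mechanism of choosing $\rho$ large to force $Q,R>0$ via \eqref{mforF} and the bounds on $F_t/F$, $(F_t/F)_t$, $F_{ut}/F_u$, and the same conclusion via \eqref{A3}. Your observation that the extra cross term coming from $F_{ut}$ is absorbed thanks to the new hypothesis on $F_{ut}/F_u$ is exactly the point the paper makes in its final remark.
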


\begin{proof} 
Let 	$w:=t\dt u + (\theta\rho + t\frac{F_t}{F})\frac{F}{F_u}$ with $\rho>0$ to be fixed later.
Differentiating~\eqref{eq:non.separable} we now have
\ben
\dtt u +\frac{1}{t}A(F_u(t\frac{F_t}{F_u}+ t\dt u))=0,
\enn 
or equivalently
\ben
\dtt u +\frac1t A\left(F_u\Big(w - \theta\rho\frac{F}{F_u}\Big)\right)=0,
\enn 
while
\ben
\dt w &=& t \dtt u + \dt u + \left(\Big(\theta\rho+t\frac{F_t}{F}\Big)\Big(1-\frac{F F_{uu}}{F_u^2}\Big) + t\Big(\frac{F_t}{F}\Big)_u \frac{F}{F_u} \right)\dt u \\
&&+ \Big(\theta\rho+t\frac{F_t}{F}\Big)\Big(\frac{F}{F_u}\Big)_t  + \Big(t\frac{F_t}{F}\Big)_t\frac{F}{F_u}.
 \enn
Combining these equations, we arrive to 
\ben
\dt w + A(F_uw) &=& \dt u\left( 1+ t\frac{F_t}{F}+ t\Big(\frac{F_t}{F}\Big)_u\frac{F}{F_u} - \tilde \rho\theta \frac{F_{uu}F}{F_u^2}  \right)\\[10pt]
&& +\theta \tilde \rho \Big(\frac{F}{F_u}\Big)_t + \Big(t\frac{F_t}{F}\Big)_t \frac{F}{F_u}\\[10pt]
&=& -\frac1t\Big(w - \theta\tilde\rho\frac{F}{F_u}\Big) Q
+ \theta\tilde \rho \Big(\frac{F}{F_u}\Big)_t + \Big(t\frac{F_t}{F}\Big)_t \frac{F}{F_u},
\enn
where 
\ben 
\tilde \rho=\rho+\theta\, t\frac{F_t}{F},\qquad Q = -\left( 1+ t\frac{F_t}{F} + t\Big(\frac{F_t}{F}\Big)_u\frac{F}{F_u} \right) + \tilde \rho \theta \frac{F_{uu}F}{F_u^2}.
\enn
Then
\ben
t\dt (\theta w) + A(tF_u\theta w) + Q\theta w =   \left(\tilde \rho Q + t\Big(\frac{F}{F_u}\Big)_t\frac{F_u}{F} + 
\theta\,t\Big(t\frac{F_t}{F}\Big)_t \right)\frac{F}{F_u}.
\enn
It follows easily from the assumptions on $F$ that if we take $\rho$ large enough, then $\tilde\rho$ is positive and large enough so that
$$
Q>0,\qquad \tilde \rho Q + t\Big(\frac{F}{F_u}\Big)_t\frac{F_u}{F} + 
\theta\,t\Big(t\frac{F_t}{F}\Big)_t >0,
$$
and  the result follows as in the proof of Theorem~\ref{theo-u}. 

Note that 
\[
\Big(\frac{F}{F_u}\Big)_t\frac{F_u}{F}= \frac{F_tF_u-F F_{u,t}}{F_uF}
= \frac{F_t}{F} - \frac{F_{u,t}}{F_u}.
\] 
\end{proof} 
\subsection{Consequences for fully nonlinear parabolic equations}

We discuss here implications for the models studied in  \cite{AbergelLoeper, BoLoZo2}. 

We assume that $v$ is a classical solution to \eqref{eq:intro-v} on $[0,T]$, and hence that $u=-A(v)$ solves equation \eqref{eq:non.separable}.  We start by proving an auxiliary result.
\begin{lemme}
Let $v$ be a locally bounded classical solution  to \eqref{eq:intro-v} on $[0,T]$, with $F$ satisfying the assumptions of~Theorem~\ref{theo:non.separable} with $\theta=1$, and with initial data $v_0$.   Given $M>0$, there exists $M'> M$ such that 
\ben
	-A(v_0) \geq M'\implies -A(v) \geq M\quad  \text{for }t\in[0,T].
\enn
\end{lemme}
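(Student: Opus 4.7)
Setting $u=-A(v)$, $u$ solves~\eqref{eq:non.separable} with $u(0,\cdot)\geq M'$, and the goal is to deduce $u(t,x)\geq M$ on $[0,T]$ for a suitable $M'$. The first step would be a reduction via the comparison principle proved at the beginning of Section~\ref{sect-time.derivative.estimate}: letting $\underline u$ denote the solution of~\eqref{eq:non.separable} with spatially constant initial datum $\underline u(0,\cdot)\equiv M'$, comparison yields $u\geq\underline u$ on $[0,T]\times\mathbb{R}^d$. It therefore suffices to prove $\underline u(t,x)\geq M$, with $M'$ to be chosen large enough depending on $M,T$ and the data.

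The second step would be to apply Theorem~\ref{theo:non.separable} with $\theta=1$ to $\underline u$. This produces $\rho>0$, depending only on the structural bounds on $F$ (and hence independent of $M'$), such that $t\mapsto t^{\rho}F(t,x,\underline u(t,x))$ is non-decreasing on $(0,T]$ for each fixed $x$. Consequently, for $0<\tau\leq t\leq T$,
\[
F(t,x,\underline u(t,x)) \;\geq\; (\tau/T)^{\rho}\, F(\tau,x,\underline u(\tau,x)).
\]
My plan from here is to pick $\tau=\tau(M',T)>0$ small, use the short-time continuity of the classical solution $\underline u$ starting from the constant state $M'$ to get the uniform bound $\underline u(\tau,x)\geq M'/2$, and then use monotonicity of $F$ in $u$ to deduce
\[
F(t,x,\underline u(t,x)) \;\geq\; (\tau/T)^{\rho}\, \inf_{x}F(\tau,x,M'/2)\qquad\text{for }t\in[\tau,T].
\]
Inverting $F$ in its $u$-variable then produces a lower bound on $\underline u(t,x)$ which, by the convexity condition~\eqref{mforF}, tends to $+\infty$ as $M'\to+\infty$. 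Choosing $M'$ large enough makes this exceed $M$ on $[\tau,T]$; on $[0,\tau]$, the short-time estimate $\underline u\geq M'/2\geq M$ closes the argument.

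The delicate step is the uniform short-time bound $\underline u(\tau,\cdot)\geq M'/2$: it requires controlling $|\partial_t \underline u|=|A(F(t,\cdot,\underline u))|$ uniformly in $x$ near $t=0$, which goes slightly beyond the bare hypotheses of Theorem~\ref{theo:non.separable}. In the applications targeted here, e.g.~\eqref{eq.aberloep}, the $x$-dependence of $F$ enters only through smooth coefficients, and one can take $\tau$ proportional to a power of $M'/C(M')$ while still keeping $(\tau/T)^{\rho}\inf_{x}F(\tau,x,M'/2)$ large enough that its image under $F^{-1}(\cdot,x,\cdot)$ exceeds $M$ for $M'$ large. Ensuring compatibility of these two requirements—small enough $\tau$ to preserve the short-time bound but not so small that $(\tau/T)^{\rho}$ kills the monotonicity estimate—is the main technical point to verify.
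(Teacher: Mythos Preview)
Your route through Theorem~\ref{theo:non.separable} is quite different from the paper's, and the gap you flag is real under the stated hypotheses. The paper does \emph{not} invoke the monotonicity formula at all here. Instead it works directly with $b(t,x)=F(t,x,u(t,x))$, where $u=-A(v)$: from $\partial_t u+A(F(t,x,u))=0$ one gets $\partial_t b=-F_u\,A(b)+F_t$. The only structural fact used is the boundedness of $F_t/F$, which gives $F_t\ge -\ell b$ and hence $g:=be^{\ell t}$ satisfies $\partial_t g\ge -F_u\,A(g)$. Since constants $k$ satisfy $\partial_t k=-F_u\,A(k)$, the accretivity-based comparison yields $g\ge k$ whenever $g(0)\ge k$, i.e.\ $F(t,x,u)\ge e^{-\ell T}F(0,x,u_0)$. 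Choosing $M'$ large makes the right-hand side large uniformly, and inverting $F$ in $u$ gives $u\ge M$. No short-time argument, no $x$-regularity of $F$, and no use of~\eqref{mforF} are needed.

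By contrast, your scheme hinges on the short-time bound $\underline u(\tau,\cdot)\ge M'/2$, which requires controlling $A\big(F(t,\cdot,\underline u)\big)$ for $\underline u$ near the constant $M'$; for $A=-\Delta$ this is $\Delta_x F(0,x,M')$, and nothing in the hypotheses of Theorem~\ref{theo:non.separable} bounds second $x$-derivatives of $F$. Even granting smoothness in $x$, the competition you identify between the size of $\tau$ and the factor $(\tau/T)^\rho$ need not resolve favorably without further quantitative assumptions. So your argument would prove a weaker lemma (one with extra $x$-regularity and growth assumptions on $F$), whereas the paper's $b$-equation trick gives the result under exactly the assumptions stated.
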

\begin{proof} Let $b(t,x)=F\big(t,x,-A(v(x,t))\big)$. 
Since, by assumption, $F_t\ge -\ell F$, then 
	\ben
	\dt b = -F_u\, A(b) + F_t\geq -F_u\, A(b) - \ell b.
	\enn
Therefore, $g(t,x)=b(t,x) e^{\ell t}$ satisfies $\dt g \geq -F_u\, A (g)$, while any constant $k$ satisfies $\dt k = - F_u \, A(k)$. 	Multiplying $\partial_t(k-g)$ by $\mathds{1}_{g \leq k}$ and using property~\eqref{A3}, we conclude that $g$ remains larger than $k$ if it was so at the initial time.  Take $M'$ large so that $b(0,x)= F(t,x,-A(v_0(x,t)))$ is larger than $k>0$ to be determined. Since $F(t,x,-A(v(x,t)))\ge e^{-\ell T}k$, then $F(t,x,-A(v(x,t)))$ is large if $k$ is large enough, and we conclude that $-A(v)$ can be made as large as desired. 
\end{proof}

We now consider $\bar{v}$ solution to \eqref{eq:intro-v} such that 
\ben
A(\bar{v}_0)&=&\min\{A(v_0),-M' \},
\enn
with $M'$ as above. 
Then,  $\bu=-A(\bv)$ is a solution to \eqref{eq:non.separable} and, by the comparison principle
\ben
-A(v) \leq -A(\bv)\quad \text{holds for all time }t\geq 0.
\enn
Now, thanks to the monotonicity formula~\eqref{eq:monotonicity.non-separable}, we will prove the interior reguarity of $v$. 
\begin{theo}\label{theo-v}
Let $v\in L^\infty_{loc}([0,T)\times \R)$ be a classical solution to~\eqref{eq:intro-v}
with $F$ satisfying \eqref{mforF} with $\theta=1$ on $[M,+\infty)$ for some $M>0$, and the rest of the conditions of Theorem~\ref{theo:non.separable}.
If $\bv\in L^\infty_{loc}([0,T)\times \R)$, then 
\[
F^+(t,x,-A(v)) \in L^\infty_{loc}((0,T)\times \R),
\] 
with bounds  that depend only on $\bv_0$, $\bv$, $m$, and $t$.

Assuming moreover that either $A(v)$ is bounded from above or that $F_u(t,x,u)$ is bounded away from $0$ and $+\infty$  for $u<0$, then $v(t,\cdot)\in C^{2,\alpha}$ uniformly on $[\tau,T]$ for $\tau>0$.
\end{theo}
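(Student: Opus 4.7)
The plan is to combine the comparison lemma just proved with the monotonicity formula of Theorem~\ref{theo:non.separable} applied to the supersolution $\bv$. By construction of $\bv$ in the paragraph preceding the statement, $\bu := -A(\bv) \geq M$ on $[0,T]$, so the structural hypothesis \eqref{mforF} (which is only assumed on $[M,+\infty)$) applies to $\bu$, and Theorem~\ref{theo:non.separable} yields some $\rho>0$ such that
\[
t \mapsto t^\rho F\bigl(t,x,\bu(t,x)\bigr) \quad \text{is non-decreasing on }[0,T]
\]
for every $x\in\R$. Meanwhile, the comparison principle gives $-A(v) \leq \bu$ pointwise, and the monotonicity of $F$ in its third argument, together with $F(t,x,0)=0$, yields
\[
F^+\bigl(t,x,-A(v)\bigr) \leq F\bigl(t,x,\bu(t,x)\bigr).
\]
So it suffices to bound $F(t,x,\bu(t,x))$ from above on any interior set.

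To convert the monotonicity into such a bound, I would fix $0 < t < T' < T$ and $x \in \R$, and average the inequality $t^\rho F(t,x,\bu(t,x)) \leq s^\rho F(s,x,\bu(s,x))$ over $s\in [t, T']$. Since $\bu \geq M>0$ forces $F(\cdot,\cdot,\bu) \geq 0$, and since \eqref{eq:intro-v} gives $\dt \bv = F(\cdot,\cdot,\bu)$, integration in time produces
\[
F(t,x,\bu(t,x)) \leq \frac{(T')^\rho}{(T' - t)\,t^\rho}\bigl(\bv(T',x) - \bv(t,x)\bigr).
\]
With $\bv \in L^\infty_{loc}([0,T)\times \R)$, the right-hand side is locally bounded on $(0,T)\times \R$. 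Combined with the reduction above, this proves the first claim with a bound depending only on $\bv_0$ (through $\rho$ and $M$), local bounds of $\bv$, $m$, and $t$.

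For the $C^{2,\alpha}$ regularity, the just-obtained upper bound on $F(t,x,-A(v))$, combined with the singular character of $F$ (which forces $F(t,x,u) \to +\infty$ as $u$ approaches its upper threshold, a consequence of \eqref{mforF}), keeps $-A(v)$ strictly below the singularity on interior cylinders. Under either additional hypothesis (an upper bound on $A(v)$, or two-sided bounds on $F_u$ for $u<0$), this yields two-sided control on $-A(v)$ and uniform parabolicity of $\dt v = F(t,x,-A(v))$ on cylinders of the form $[\tau,T]\times B$. Standard parabolic Schauder estimates then deliver the uniform $C^{2,\alpha}$ regularity.

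The main obstacle in this scheme is making sure the monotonicity formula is in fact available for $\bu$ for \emph{all} $t\in[0,T]$: the structural bound \eqref{mforF} is only assumed for $u \geq M$, so one needs to propagate $\bu \geq M$ forward in time. This is precisely what the preceding lemma and the choice $-A(\bv_0)\geq M'$ guarantee; once this is secured, the remainder of the argument is a direct consequence of Theorem~\ref{theo:non.separable} and the non-negativity of $F$ in this regime.
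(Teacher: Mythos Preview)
Your argument is correct and follows essentially the same route as the paper: apply the auxiliary lemma to keep $\bu=-A(\bv)\ge M$, invoke Theorem~\ref{theo:non.separable} for the monotonicity of $t\mapsto t^\rho F(t,x,\bu)$, and integrate in time against $\dt\bv=F(\cdot,\cdot,\bu)$ to convert monotonicity plus local boundedness of $\bv$ into an interior bound on $F(\cdot,\cdot,\bu)$, hence on $F^+(\cdot,\cdot,-A(v))$ via comparison. The only cosmetic difference is that the paper keeps the factor $\int_{t_1}^{t_2} t_1^\rho s^{-\rho}\,ds$ rather than your cruder replacement $(T'-t)\,t^\rho/(T')^\rho$, which gives a slightly sharper constant but the same conclusion.
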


\begin{proof} If $\bv$ is locally bounded,  it follows from the auxiliary lemma that   $F(t,x,\cdot)$ satisfies the assumptions of Theorem~\ref{theo:non.separable} at $-A(\bar v)$ for all $t\in[0,T]$.
Therefore, Theorem~\ref{theo:non.separable} applies.
Using thethe monotonicity formula~\eqref{eq:monotonicity.non-separable} with $\theta=1$ for $0 < t_1 \leq t_2 \leq T$,
\ben
\bv(t_2,x) &=& \bv(t_1,x) + \int_{t_1}^{t_2} F(s,x,-A(\bv))ds\\
&\geq& \bv(t_1,x) + F(t_1,x,-A(\bv(t_1)))\int_{t_1}^{t_2} \frac{t_1^\rho}{s^\rho}ds\\
&\geq& \bv(0,x) + F(t_1,x,-A(\bv(t_1)))\int_{t_1}^{t_2} \frac{t_1^\rho}{s^\rho}ds,
\enn
which yields the stated boundedness of $F(t_1,x,-A(\bv))$.

The second point follows from the first, as, now, $F$ is uniformly elliptic, and standard theory applies.
\end{proof}

\section{Expansion of positiviy and application to the degenerate case}\label{sec-degen} 
\label{sect-degenerate.case} \setcounter{equation}{0}

We consider the case 
\[
\dt v = F(t,x,D^2v)=\bar{F}(t,x,A_{ij}(M-D^2v)^{ij}),\label{eq.final}
\]
where $A$ and $M$ are symmetric positive matrices and  $(M-D^2v)^{ij}$ is  the inverse of $M-D^2v$. By elementary affine transformations one can assume $A=M=I$ the identity matrix. We also assume that
$\bar{F}=\bar{F}(t,x,z)$ satisfies 
\beq\label{assum-m}
\bar{F}_z(t,x,z) \sim |z|^{m-1} \quad\text{as } z\sim0
\enq
for some $m\in[0,1]$,
that $\bar{F}$ is smooth with respect to the other variables, and
\beq
{F}(t,x,D^2v) \leq C\quad\text{on }B_r(0)\label{assum2}.
\enq
We further assume that 
\beq\label{assum3}
F_x \in \Linf_{loc}((0,T)\times \mathbb{R}^d;\Linf (\mathbb{R}_+));
\enq
that is, for compact sets $K \subset (0,T)\times \mathbb{R}^d$, $F_x \in \Linf(K\times\mathbb{R}_+)$.

The problem is defined for $(I-D^2)v$ non-negative. Hence, $\upsilon=|x|^2/2-v$ is convex,  and we can consider its lower semi-continuous Legendre transform 
\ben
\upsilon^*(y) = \sup_{x}\big( x\cdot y -\upsilon(x)\big).
\enn
When $\upsilon$ is lower semi-continuous and its supremum is attained at a point~$(x,y)$ where $\upsilon$ is twice differentiable, then
\ben
x =D \upsilon^*(y), \qquad D^2\upsilon^*(y)=[D^2\upsilon(x)]^{-1}.
\enn
Moreover, if $\upsilon$ depends smoothly on $t$, 
\ben
\dt \upsilon(t,x) + \dt \upsilon^*(t,y) = 0.
\enn

The equation satisfied by $\upsilon^*$ is now 
\beq
\dt \upsilon^* = \bar{F}(t,D_y \upsilon^*,\Delta \upsilon^*).\label{eng0}
\enq
Note that~\eqref{assum2} implies that  $0 \leq D^2\upsilon^* \leq C$ on $B_r(0)$.  
Here we establish an independent result for this parabolic equation, on the condition that the solution is convex. 
\begin{theo}\label{theo:degen}
Let $\bar{F}\in C^1_{loc}([0,T]\times \mathbb{R}^d \times \mathbb{R})$ having the behaviour~\eqref{assum-m} for some $m\in [0,1]$. Assume that $\upsilon^*$ is a convex solution to \eqref{eng0} such that $\Delta \upsilon^*$ is bounded from above, and not identically 0 until time $T$. Then for $t>0$, $\upsilon^*$ is $C^2$ smooth in $y$  and $\Delta \upsilon^*$ is bounded away from $0$ locally uniformly on $(0,T)\times \mathbb{R}^d$.
\end{theo}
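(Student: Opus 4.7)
The strategy is an expansion of positivity for $u:=\Delta\upsilon^*$, in the spirit of Di~Benedetto~\cite{DiBenedettoDegen}, exploiting the ``fast diffusion'' character of~\eqref{eng0} when $m\le 1$. Convexity gives $u\ge 0$; the standing hypotheses give $u\le C$ locally and $u\not\equiv 0$ on each time slice $\{t\}\times\mathbb{R}^d$ for $t<T$. The goal is to upgrade this qualitative positivity into a quantitative local lower bound $u\ge\eta>0$ on compact subsets of $(0,T)\times\mathbb{R}^d$, after which the upper bound makes~\eqref{eng0} uniformly parabolic in~$y$ and the announced $C^2$ smoothness follows from classical parabolic Schauder / Krylov--Safonov theory.

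The key structural observation is that, since $z\mapsto z^{m-1}$ is non-increasing for $m\in[0,1]$, the upper bound $u\le C$ forces $\bar F_z(t,y,z)\ge c_0>0$ on the relevant range. Thus the equation is ``at least as parabolic as'' a uniformly parabolic equation, with the additional favourable feature that $\bar F_z\to+\infty$ on the degeneracy set $\{u=0\}$ when $m<1$ --- i.e., large diffusion precisely where positivity needs to spread. My plan is to differentiate~\eqref{eng0} in time and work with $w:=\dt\upsilon^*=\bar F$, which, using $\dt(\nabla\upsilon^*)=\nabla w$ and $\dt(\Delta\upsilon^*)=\Delta w$, satisfies the \emph{linear} parabolic equation
$$
\dt w=\bar F_z\,\Delta w+\bar F_p\cdot\nabla w+\bar F_t,
$$
where $\bar F_p$ denotes the derivative of $\bar F$ with respect to the gradient argument. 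This linearization is clean because no third derivatives of $\upsilon^*$ appear; the drift $\bar F_p$ is locally bounded thanks to the local Lipschitz regularity of $\upsilon^*$ (which follows from $0\le D^2\upsilon^*\le C\,I$) combined with~\eqref{assum3}. A parabolic Harnack / strong maximum principle applied to this equation, after shifting by the constant value of $\bar F$ at the degenerate configuration so as to obtain a non-negative quantity, will propagate the positivity contained in the hypothesis $u\not\equiv 0$ to a quantitative local lower bound on $w$, which by strict monotonicity of $\bar F$ in $z$ translates into the desired bound $u\ge\eta$.

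The main obstacle is the Harnack step: the diffusion coefficient $\bar F_z$ is bounded below by $c_0>0$ but possibly unbounded above on the unknown degeneracy set $\{u=0\}$, which lies outside the scope of the classical Moser / Krylov--Safonov theory. The singularity is however in the ``favourable'' direction, and only a lower bound on the diffusion is really needed for positivity to spread. The natural device is to approximate $\bar F$ by a smoothed $\bar F^\varepsilon$ with $\bar F^\varepsilon_z$ additionally capped from above, solve the corresponding uniformly parabolic linearized equation, obtain a quantitative Harnack lower bound at the $\varepsilon$-level, and pass to the monotone limit using the comparison principle of Section~\ref{sect-time.derivative.estimate}, which preserves one-sided bounds. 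Transferring the resulting bound back to $u$ completes the proof and then unlocks the uniform parabolicity that underlies the $C^2$ regularity conclusion.
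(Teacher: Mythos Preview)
Your approach is genuinely different from the paper's, and it has a real gap at the Harnack step. The linearized equation for $w=\partial_t\upsilon^*$ has diffusion coefficient $\bar F_z(t,D\upsilon^*,u)$, which the upper bound $u\le C$ makes bounded \emph{below} but leaves \emph{unbounded above} precisely on the putative degeneracy set $\{u=0\}$. Classical parabolic Harnack (Moser or Krylov--Safonov) needs a two-sided ellipticity bound, and the constant depends on the ratio $\sup\bar F_z/\inf\bar F_z$. Your approximation by a capped $\bar F^\varepsilon$ therefore produces Harnack constants that blow up as the cap is removed, so the monotone limit does not transmit any positive lower bound on $w$. There is no general Harnack inequality for linear non-divergence equations with only a lower bound on the diffusion (a time-change argument for $\partial_t w=a(t)\Delta w$ with $a\ge1$ unbounded already shows why). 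The intuition that ``large diffusion spreads positivity'' is correct, but making it quantitative requires engaging the nonlinear structure, not freezing it into a linear equation with merely measurable, possibly unbounded coefficients. A secondary issue: the ``shift by the constant value of $\bar F$ at the degenerate configuration'' is not a constant, since $\bar F(t,p,0)$ depends on $(t,p)$; the shifted function then fails to satisfy the same homogeneous linear equation, so nonnegativity and the comparison you invoke are not available.

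The paper proceeds instead by taking the Laplacian of~\eqref{eng0} to obtain a \emph{divergence-form} equation for $u=\Delta\upsilon^*$,
\[
\partial_t u=\operatorname{div}(\bar F_p\,D^2\upsilon^*)+\operatorname{div}(\bar F_z\,\nabla u),
\]
and runs Moser iterations with negative exponents $u^\beta$, $\beta<0$. Convexity together with the upper bound gives the pointwise control $0\le D_{ij}\upsilon^*\le u$, which is what tames the lower-order term. The choice $\beta=-m$ yields an $L^2$ bound on $\nabla\log u$, hence a John--Nirenberg/BMO estimate (as in \cite[\S8.6 and Theorem~7.21]{GT}) giving $\big(\fint u^{p_0}\big)\big(\fint u^{-p_0}\big)\le D$ for some $p_0>0$; iterating~\eqref{moser1} then upgrades the $L^{-p_0}$ bound to $\sup(1/u)<\infty$ on interior cylinders. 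This is exactly the mechanism of expansion of positivity for singular (fast-diffusion) equations in Di~Benedetto, and it is the step your linearization bypasses but cannot replace.
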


\begin{proof} If $m=1$ the problem is uniformly elliptic, and the result is well known, so we  assume $m \neq 1$. 
	
Let $u=\Delta \upsilon^*$. Then, 
\beq
\dt u =\text{div}(\bar{F}_xD^2 \upsilon^*)+ \text{div}(\bar{F}_z \nabla u).\label{eng2}
\enq
The proof is done by Moser iterations. We follow the technique of \cite{GT} that we adapt from  the elliptic to the parabolic case. We first observe that from the convexity of $\upsilon^*$ and the fact that $\Delta \upsilon^*$ is bounded,
$D^2 \upsilon^*$ is bounded,  and $D_{ij} \upsilon^* \leq u$.
Multiplying \eqref{eng2} by $\eta^2(y)u^\beta$ for $\beta<0, \beta \neq -m,-1$ we obtain
\begin{equation}
\label{moser1}
	\begin{array}{l}
\displaystyle|\beta| \int_{t_1}^{t_2}\int_{\Rd} (\eta \dy(u^{\frac{\beta+m}{2}})\frac{2}{\beta+m})^2 \leq
C\Big(\frac{1}{\beta+1}\int_{\Rd} \eta^2 u^{\beta+1}(t,y) \,dy \Big|^{t_2}_{t_1} \\[10pt]
\displaystyle\qquad+ \int_{t_1}^{t_2}\int_{\Rd}(\dy \eta)^2 (u^{\beta+m}+u^{\beta+1}) + \eta^2 u^{\beta+2-m}\Big),
\end{array}
\end{equation}
where $C$ depends on our assumptions on $\bar{F}$ and the bound on $u$.
If $\beta=-m$ we obtain :
\[
\begin{array}{l}
\displaystyle m\int_{t_1}^{t_2}\int_{\Rd} (\eta \dy(\ln u))^2 \leq
 C\Big(\frac{1}{1-m}\int_{\Rd} \eta^2 u^{1-m}(t,y)\, dy \Big|^{t_2}_{t_1} \\[10pt]  
\displaystyle\quad+ \int_{t_1}^{t_2}\int_{\Rd}(\dy \eta)^2 (1 + u^{1-m}) + \eta^2 u^{2-2m}\Big).
\end{array}
\]
Following \cite[Section 8.6]{GT}  the second bound yields that 
\ben
\fint_{[t_1,t_2]}  \int_{B_r(0)} |\dy(\ln u)| dy \leq Cr^{n-1},
\enn
 and hence by \cite[Theorem 7.21]{GT}  that for some $p_0>0$ and $l=\frac{1}{|B_r|}\int_{B_r} \ln u$ there holds
\ben
\fint_{[t_1,t_2]}\int_{B_r} e^{p_0|\ln u - l |} dy \leq D.
\enn
Note that $C,D$ here might depend on $\|u\|_{L^\infty([t_1,t_2]\times B_r)}$ which we control anyway.
This in turn implies 
\ben
\left(\fint_{[t_1,t_2]}\int_{B_r} u^{p_0}\right)\left( \fint_{[t_1,t_2]}\int_{B_r} u^{-p_0}\right) \leq D,
\enn
which gives a bound on $\fint_{[t_1,t_2]}\int_{B_r} u^{-p_0}$ depending also on $\big(\fint_{[t_1,t_2]}\int_{B_r} u^{p_0}\big)^{-1}$. 

From \eqref{moser1} using the boundedness of $u$ and fixing some $\theta \in (0,1)$  we deduce
\[
\begin{array}{l}
\displaystyle
\int_{t_1}^{t_2}\int_{B_{\theta r}}(\dy(u^{\frac{\beta+m}{2}})\frac{2}{\beta+m})^2\, dtdy\leq  \\[10pt]
\displaystyle\qquad
\frac{C}{|\beta|}\frac{1}{|\beta+1|}\int_{B_r} u^{\beta+1}(t,y)\, dy \Big|^{t_2}_{t_1}
+ r^{-2}\int_{t_1}^{t_2}\int_{B_r}u^{\beta+m}\,dtdy.
\end{array}
\]
Sobolev's inequality will then yield a control on $\|u\|_{q(m)}$, for 
\[q(m)=\frac{\beta+m}{2}\frac{2d}{d-2}=\frac{(\beta+m)d}{d-2}\]
if $d \geq 2$ and $+\infty$ otherwise.
By starting with $\beta+m=-p_0$ above, and classically iterating Sobolev's injection this gives a bound of the form
\ben
\sup_{y\in B_r} \frac{1}{u(t_3,y)} \leq C(r, m,  \|u\|_{L^\infty([t_1,t_2]\times B_r}, \frac{1}{\inf_{[t_1,t_2]}\{\|u\|_{L^{p_0}(B_r)}\}}, \nu)
\enn 
for $t_3\in [t_1+\nu, t_2-\nu]$.
 Equation \eqref{eng2} becomes now uniformly elliptic, and we obtain that $u\in C^\alpha$. As $u=\Delta \upsilon^*$, classical elliptic regularity then yields $\upsilon^* \in C^{2,\alpha}_y$.
 \end{proof}

\noindent\emph{Remarks. }  (i) When $d>1$, this  theorem does not imply that $D^2 \upsilon$ is uniformly positive.

\noindent (ii) Equation \eqref{eng2} and our result is somehow similar to the porous medium like equation addressed in~\cite{DiBenedettoDegen}; see equation 5.1 of Chapter 3, and the proof in Proposition 7.2 of Chapter 4 about expansion of positivity for singular porous medium equations. However in our present case the a priori knowledge that $D^2\upsilon^*$ is positive and bounded considerably simplifies the estimates. 

\noindent (iii)  The presence of the term $\frac{1}{\inf_{[t_1,t_2]}\{\|u\|_{L^{p_0}(B_r)}\}}$ in the estimate implies that it is valid up to extinction. Indeed, before extinction, there exists always $R$ large enough so that $\|u\|_{L^{p_0}(B_r)}$ is bounded away from 0. Extinction in our case means that $\Delta \upsilon^* \equiv 0$, hence that $\Delta v \equiv -\infty$ which does not occur if there is a bounded subsolution to \eqref{eq:intro-v}.

\noindent (iv) If we remain in a class of solutions to \eqref{main} in which the comparison principle holds, then the {\it expansion of positivity} result of Theorem~\ref{theo:degen} should remain valid  without assuming that $\Delta\upsilon^*$ is bounded from above. Equivalently, one can write that $\min\{v,\frac{C}{\vf^{-1}(\kappa)}\}$ is a supersolution to \eqref{main} and proceed with the estimates.
 
\medskip

As a corollary, we have an interior lower bound for Laplacian of solutions to~\eqref{eq:intro-v}. 
\begin{theo}\label{theo-degen2} Let $v$ be a solution to~\eqref{eq:intro-v}. Assume that $F$, $\bar{F}$ and $v$ satisfy \eqref{assum-m}--\eqref{assum3}. Then $\Delta v$ admits an interior lower bound in $B_{\theta r}(0)$ for $\theta<1$.
\end{theo}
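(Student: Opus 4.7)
The strategy is to reduce the lower bound on $\Delta v$ to the expansion-of-positivity statement Theorem~\ref{theo:degen}, applied to the Legendre transform $\upsilon^*$ of $\upsilon(t,x):=|x|^2/2-v(t,x)$, using the construction already laid out at the beginning of Section~\ref{sect-degenerate.case}. After the affine normalization $A=M=I$, the very fact that $(M-D^2v)^{-1}$ appears in \eqref{eq.final} forces $D^2\upsilon = I-D^2v \geq 0$, so $\upsilon(t,\cdot)$ is convex; hence $\upsilon^*$ is also convex, satisfies \eqref{eng0}, and $D^2\upsilon^*(y)=[I-D^2v(x)]^{-1}$ at points of $C^2$ smoothness.

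Next I would verify the hypotheses of Theorem~\ref{theo:degen} for $\upsilon^*$. The regularity of $\bar{F}$ and the $|z|^{m-1}$ behaviour of $\bar{F}_z$ near $z=0$ come from \eqref{assum-m} and \eqref{assum3}. Because $\bar{F}(t,x,\cdot)$ is monotone (since $\bar{F}_z > 0$ near the origin), the inequality $\bar{F}(t,x,\Delta \upsilon^*) = F(t,x,D^2v) \leq C$ provided by \eqref{assum2} inverts to an upper bound $\Delta \upsilon^* \leq C'$ on the Legendre image of $B_r(0)$. Non-extinction of $\Delta\upsilon^*$ before time $T$ is immediate, since $\Delta\upsilon^* \equiv 0$ would correspond to $D^2v \equiv -\infty$, which is excluded for a classical solution of the class considered.

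Theorem~\ref{theo:degen} then gives $\Delta\upsilon^* \geq c > 0$ locally uniformly on $(0,T)\times \mathbb{R}^d$. Using the identity $\Delta v = d - \mathrm{tr}[(D^2\upsilon^*)^{-1}]$, this translates into the desired interior lower bound on $\Delta v$: in dimension one it is immediate, since $\dyy \upsilon^* \geq c$ forces $\dxx v = 1 - 1/\dyy\upsilon^* \geq 1 - 1/c$; for the spatial localization, one checks that the Legendre gradient map $y=x-\nabla v(x)$ is bi-Lipschitz on the region where $D^2\upsilon^*$ is controlled above and below, so that the bound obtained on the dual image descends to the original ball $B_{\theta r}(0)$ for any $\theta < 1$.

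The main obstacle is precisely this back-translation step in dimension $d > 1$: a positive lower bound on the trace $\Delta\upsilon^*$ does not, by itself, control the smallest eigenvalue of $D^2\upsilon^*$, so $\mathrm{tr}[(D^2\upsilon^*)^{-1}]$ need not be bounded from above (this is in the spirit of Remark~(i) above). Recovering $\Delta v \geq -C$ in higher dimensions would require an eigenvalue-wise refinement of the Moser iteration used in the proof of Theorem~\ref{theo:degen}, or a direct argument on $(I-D^2v)^{-1}$; in the one-dimensional setting that motivates \eqref{eq.aberloep} no such issue arises, and the plan above yields the theorem.
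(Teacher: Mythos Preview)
Your proposal is correct and follows essentially the same route as the paper: apply Theorem~\ref{theo:degen} to the Legendre dual $\upsilon^*$ and translate the lower bound on $\Delta\upsilon^*$ back to a lower bound on $D^2v$ (hence on $\Delta v$). Your explicit caution about the $d>1$ back-translation is in fact more careful than the paper's one-line proof, and is exactly the content of Remark~(i) following Theorem~\ref{theo:degen}.
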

\begin{proof} 
Theorem~\ref{theo:degen} implies that $\upsilon$ is bounded away from $+\infty$, and hence that the $D^2v$ as a matrix is bounded from below (i.e. its eigenvalues are bounded away from $-\infty$).
\end{proof}


\section{Consequence for fully non-linear Hamilton-Jacobi-Bellman equations}
\label{sect-consequence.fnl} \setcounter{equation}{0}

This section is motivated by the papers \cite{AbergelLoeper, LoMi1, BoLoSonZou} of the first author, where fully non-linear versions of the Black-Scholes equation are considered in the context of financial derivatives pricing with {\it market impact}. 
We are in dimension $d=1$,  $A=-\dxx$, and $F(t,x,\gamma):([0,T]\times \R \times \R) \to \R$ satisfies the assumptions of Theorem~\ref{theo:non.separable}, for  $\gamma >0$ and such that 
$F_\gamma \sim \gamma^{m-1}$ for $
\gamma<0$ with $m\in [0,1]$.

Considering again equation \eqref{eq:intro-v}, but backwards in time (as is usually the case for stochastic control problems)
\beq\label{eq:HJB}
\dt v +F(t,x,\dxx v)=0,
\enq
for which, we assume that the classical solution $u$ is locally bounded.
By combining Theorems \ref{theo:non.separable} and \ref{theo:degen} we obtain the following interior regularity result.

\begin{theo}\label{theo:reg-aber-loep}
Under the above assumptions,  the solution to \eqref{eq:HJB}
belongs to $C^{2,\alpha}(\R)$ for $0\leq t<T-\tau$ for any $\tau>0$.
In particular, the result applies to the solution of \eqref{eq.aberloep} if $0<p_1 \leq 1$, $p_1 \leq p_2$, $\kappa$ satisfies the assumptions of Theorem~\ref{theo-u}, and  $\dx \kappa$ is bounded.  
\end{theo}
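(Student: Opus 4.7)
The plan is to establish matching upper and lower bounds on $\dxx v$ on each interior cylinder $[\tau,T-\tau']\times B_R$, thereby reducing \eqref{eq:HJB} to a uniformly parabolic equation on which standard Krylov--Safonov and Schauder theory deliver $C^{2,\alpha}$ regularity. As a preliminary step, reverse time via $\tilde v(t,x):=v(T-t,x)$, which produces the forward equation $\dt\tilde v=\widetilde F(t,x,\dxx\tilde v)$ with $\widetilde F(t,x,\gamma):=F(T-t,x,\gamma)$. This fits~\eqref{eq:intro-v} with $A=-\dxx$, and the regularity in $t$ of $F$ transfers unchanged to $\widetilde F$.

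For the upper bound on $\dxx\tilde v$, apply Theorem~\ref{theo-v} with $\theta=1$: the structural hypothesis~\eqref{mforF} on $[M,+\infty)$ is given by assumption, and the required locally bounded supersolution $\bar v$ is produced as in the paragraph preceding Theorem~\ref{theo-v} by starting from initial data with $-A(\bar v_0)=\max\{-A(\tilde v_0),M'\}$ for $M'$ large, its local boundedness propagating by the maximum principle since $F$ is bounded whenever $\gamma$ is kept at a fixed distance from the singular threshold. Theorem~\ref{theo-v} then yields $\widetilde F^+(t,x,\dxx\tilde v)\in\Linf_{loc}((0,T)\times\R)$; since $F\to+\infty$ as $\gamma$ approaches the singular threshold, this forces $\dxx\tilde v\leq C_K<\lambda^{-1}$ on each compact $K\subset(0,T)\times\R$.

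For the lower bound, use the framework of Section~\ref{sec-degen}. The previous step makes $\upsilon:=|x|^2/(2\lambda)-\tilde v$ locally uniformly strictly convex in the interior, so its Legendre transform $\upsilon^*$ is smooth, solves~\eqref{eng0}, and satisfies $\Delta\upsilon^*=(\lambda^{-1}-\dxx\tilde v)^{-1}$, which is locally bounded above. A direct computation of $\bar F_z$ as $z\to 0^+$ gives $\bar F_z\sim z^{p_1-1}$, placing us in the regime $m=p_1\in(0,1]$ of Theorem~\ref{theo:degen}; local boundedness of $\tilde v$ rules out the extinction alternative of Remark~(iii), so Theorem~\ref{theo-degen2} delivers the interior lower bound $\dxx\tilde v\geq -C_K$.

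With both bounds in hand, $\dxx\tilde v$ is confined to a compact subinterval of $(-\infty,\lambda^{-1})$ on each $K$, where $F_\gamma$ is smooth and pinched between two positive constants; the equation is uniformly parabolic there, and Krylov--Safonov followed by Schauder yields $C^{2,\alpha}$ regularity, which upon reverting time gives the announced conclusion for $0\leq t<T-\tau$. Verification for \eqref{eq.aberloep} reduces to: (i) the $\gamma\uparrow\lambda^{-1}$ behaviour of $FF_{\gamma\gamma}/F_\gamma^2$, which tends to $(p_2+1)/p_2>1$ (dominated by the $c$-term since $p_2\geq p_1$), giving~\eqref{mforF}; (ii) the $z\to 0^+$ scaling of $\bar F_z$, dominated by the $b$-term since $p_1\leq p_2$ and yielding $m=p_1\in(0,1]$; and (iii) the regularity hypotheses on $F_t$ and $F_x$, which follow from the assumptions on $\kappa$ and on $\dx\kappa$. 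The main obstacle I expect is the bookkeeping between the two steps: each theorem consumes a one-sided bound as input and produces the opposite one, so one must check that Step~1's output (namely $\dxx\tilde v\leq\lambda^{-1}-\varepsilon$) is precisely the ingredient Step~2 needs to make $\Delta\upsilon^*\leq 1/\varepsilon$, and that no hidden circularity enters via the supersolution or the domain of the Legendre transform.
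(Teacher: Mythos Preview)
Your proposal is correct and follows exactly the route the paper indicates: the paper gives no explicit proof of Theorem~\ref{theo:reg-aber-loep}, merely stating that it follows ``by combining Theorems~\ref{theo:non.separable} and~\ref{theo:degen}'', and your argument is precisely that combination---Theorem~\ref{theo-v} for the upper bound on $\dxx v$ near the singular threshold, then Theorem~\ref{theo-degen2} (via the Legendre transform) for the lower bound, then uniform parabolicity and Schauder. Your verifications for~\eqref{eq.aberloep}, in particular the asymptotics $FF_{\gamma\gamma}/F_\gamma^2\to (p_2+1)/p_2$ and $\bar F_z\sim z^{p_1-1}$, are the right computations and fill in details the paper leaves implicit.
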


This bound also has probabilistic interpretation:
We consider the associated stochastic differential equation 
\[
dX_t=\sigma(t,X_t)dW_t,\qquad
\sigma^2(t,X_t)= 2 \kappa(t,X_t)\varphi'(\dxx v(t,X_t)),
\]
which corresponds to the linearized equation.
As done in \cite{AbergelLoeper, LoMi1, BoLoSonZou}, we have
\ben
\dt (\kappa(t,x)\vf(\dxx v)) + \kappa \vf'(\dxx v) \dxx(\kappa \vf(\dxx v)= \frac{\dt \kappa}{\kappa}(\kappa(t,x)\vf(\dxx v)).
\enn
We thus have (under  assumptions  that guarantee that the representation formula holds) that for $V_t=\kappa\varphi(\dxx v)(t,X_t)$,
\ben
V(t,x) = \E_{t,x}\left(V(T,X^{t,x}_T) e^{-\int_t^T \dt \kappa/\kappa}\right).
\enn
The interior bound on $\vf(\dxx v)$  implies that the stochastic differential equation is well defined on $[0,T)$, and that
\ben
\bP(\vf(\dxx v(T,X_T))=+\infty) = 0.
\enn


\

\noindent{\large \textbf{Acknowledgments}}

\noindent We thank J.L. V\'azquez for useful comments.

\noindent FQ was supported by projects MTM2014-53037-P and MTM2017-87596-P (Spain).

\bibliography{biblio-greg}

\

\noindent\textbf{Addresses:}

\noindent\textsc{Gregoire Loeper: }
\\ Monash University 
\\ School of Mathematics,
\\ 9 Rainforest Walk 
\\ 3800 Clayton Vic, Australia
\smallskip
\\ email: gregoire.loeper@monash.edu

\bigskip

\noindent\textsc{Fernando Quir\'{o}s: } 
\\ Departamento de Matem\'{a}ticas 
\\ Universidad Aut\'{o}noma de Madrid
\\ 28049 Madrid, Spain
\smallskip
\\ e-mail: fernando.quiros@uam.es

\end{document}